\theoremstyle{plain}
\newtheorem{theorem}{Theorem}[section]
\newtheorem{corollary}[theorem]{Corollary}
\newtheorem{lemma}[theorem]{Lemma}
\theoremstyle{definition}
\title{Second-Order Cone Programming for P-Spline Simulation Metamodeling}
\begin{document}

\author{
Yu Xia \thanks{
Lakehead University,  Canada 
\texttt{yxia@lakeheadu.ca}
Research supported in part by NSERC.
}
\and
Farid Alizadeh\thanks{
Management Science and Information Systems,
Rutgers University, NJ,  USA
\texttt{farid.alizadeh@rutgers.edu}
}
}

\date{April 2015}

\maketitle

\section*{Abstract}
This paper approximates simulation models by B-splines with a penalty on high-order finite differences of the coefficients of adjacent B-splines.  The penalty prevents overfitting.  The simulation output is assumed to be nonnegative.   The nonnegative spline simulation metamodel is casted as a second-order cone programming model, which can be solved efficiently by modern optimization techniques.  The method is implemented in MATLAB/GNU Octave.

\section{Introduction}
People use computer simulation to study complex systems that prohibit analytical evaluations, in order to have a basic understanding of the system, or to find robust decisions or policies, or to compare different decisions or policies~\cite{KSLC2005}. 
Simulation is applied in various areas~\cite{Banks2000, Law1991, Oden2006}, and it is considered as one of the three most important operations research techniques~\cite{LMH}.
Let $y$ represent the response and $x$ represent the input of a system.  A simulation model can then be written as 
\[ y = f(x) . \] 
   In situations where the systems are so complex that even their valid simulation models can't be evaluated in reasonable time, metamodels, or models of models~\cite{Kleijnen75}, are constructed to approximate the simulation models.
Advantages of the simulation metamodel include ``model simplification, enhanced exploration and interpretation of the model, generalization to other models of the same type, sensitivity analysis, optimization, answering inverse questions, and providing the researcher with a better understanding of the behaviour of the system under study and the interrelationships among the variables'' \cite{FLP88}.
      
Parametric polynomial response surface approximation is the most popular technique for building metamodels \cite{Barton98}.  To determine and quantify the unknown or too complex relationship between the response variables and the experimental factors assumed to influence the response, in response surface methodolgy -- introduced by Box and Wilson~\cite{BoxWilson51}, a mathematical model is constructed to fit the data collected from a series of experiments, and the optimal settings of the experimental factors is determined~\cite{MR861118, MR1447628, MR2464113}. Usually the mathematical model is a first or second order polynomial, called a response surface.

By Weierstrass approximation theorem, every continuous function can be uniformly approximated as closely as desired by a polynomial.  Polynomials are easy to compute and have continuous derivatives of all orders.  On the other hand, polynomials are inflexible: their values on a complex plane are determined by an arbitrarily small set \cite[Theorem 3.23]{MR2348176}; they oscillate increasingly with the increase in the order of the polynomials, while high-order is required for suitable accuracy in polynomial approximation; the Runge phenomenon~\cite{Runge1901} is the classic example of divergent polynomial interpolation.  A polynomial fits data nicely near one data point may display repulsive features at parts of the curve not close to that particular data point.

Approximation by splines (smooth piecewise polynomials) overcomes the inflexibility of polynomial approximation.  In practice, B-Splines, which are first thoroughly studied by~\cite{MR0016705}, are widely used in approximation, as there are good properties associated with B-splines~\cite{deBoor01}.  Especiallarly, compared with representations by splines in truncated power basis---defined as $\big\{(x-t_i)_+^j/j! \colon  (j=1, \dots k) \big\}$ for node $t_i$, B-spline representations are relatively well-conditioned and involve fewer basis functions computationally.
Let $\mathbf{t} \equiv (t_i)$ be a nondecreasing sequence.  The $i$th (normalized) B-spline basis function of order $1$ for the knot sequence $\mathbf{t}$ is defined as follows:
\[ B_{i1\mathbf{t}}(x) \equiv \chi_i (x) \equiv \begin{cases} 1  & t_i \leq x < t_{i+1} \\ 0  & \text{otherwise} \end{cases} . \]
When it can be inferred from the context, the knots $\mathbf{t}$ and variable $x$ are omitted in the notations for B-spline representations.  Denote 
\[ \omega_{ik}(x ) \equiv \begin{cases} \frac{x - t_i}{t_{i+k-1} - t_i}, & t_i \neq t_{i+k-1} \\ 0 & \text{otherwise} \end{cases} . \]
 For $k >1$, the $i$th B-Spline basis function of order $k$ for knot sequence $\mathbf{t}$ can be obtained recursively by:
\begin{equation} \label{BSplineDef} B_{ik} \equiv \omega_{ik} B_{i, k-1} + (1-\omega_{i+1, k}) B_{i+1, k-1} . \end{equation}

The spline basis function $B_{ik\mathbf{t}}$ depends only on the knots $t_i, \dots, t_{i+k}$.   It is positive on the interval $(t_i, t_{i+k})$ and is zero elsewhere.   The Curry-Schoenberg Theorem~\cite{MR0218800} describes how to construct B-spline basis for the linear space of piecewise polynomial functions satisfying predefined continuity conditions based on the muliplicity of the knots.  This property simplifies the approximation of functions with required degree of smoothness, compared with truncated power spline approximation, where additional constraints on smoothness need to be included in the model.
The de Boor algorithm~\cite{MR0338617} is a well-conditioned yet efficient way of evaluating B-splines.

\paragraph{P-Spline approximation.}
To fit the metamodel with data collected from experiments, i.e., to find the parameters for the B-spline approximation, we conside P-spline regression \cite{MR1435485}.   The objective function of a P-spline regression combines B-splines with a penalty on high-order finite differences of the coefficients of adjacent B-splines.  Similar to the smoothing term in the loss function for  smoothing spline regression~\cite{MR0295532, MR0167768}, the penalty in P-spline regression loss function prevents overfitting, i.e., the penalty  reduces the variation of the fitted curve caused by data error.  Compared with smoothing splines,  P-splines are relatively inexpensive to compute and without the complexity of choosing the optimal number and positions of knots --- too few data points causes under fitting while too many data points results in overfitting.  An algorithm of determining the number of knots for P-spline regression is given in~\cite{MR1944261}.

Let $(\alpha_i)$ represent the B-spline coefficients.  The second-order differences of the adjacent B-spline coefficients for the knot sequence $\mathbf{t}$ are
\[ \left( \alpha_j - \alpha_{j-1} \right) - \left( \alpha_{j-1} - \alpha_{j-2} \right) =  \alpha_j - 2\alpha_{j-1} + \alpha_{j-2}.
\]

Denote the parameter controlling the smoothness of the fit by $\lambda$.
The least squares objective function (loss function) of the regression of $m$ data points $(x_i, y_i)$ using $n$ B-spline basis functions of order four with a penalty on second-order differences of the B-spline coefficients, i.e. the P-spline regression loss function for fitting the metamodel studied in this paper, is
\begin{equation} \label{eq:PSpline}
\min_\alpha \quad \sum_{i=1}^m \left[ y_i - \sum_{j=1}^n \alpha_j B_{j4}(x_i) \right]^2 + \lambda \sum_{j=3}^n \left( \alpha_j - 2\alpha_{j-1} + \alpha_{j-2} \right)^2.
\end{equation}

\paragraph{Nonnegative model fitting.}  In many applications, the response of the system is known or required to be nonnegative or above some threshold; for instance, when the output of the system describes duration, productions, prices, demand, sales, wages, amount of precipitation, probability mass, etc; see~\cite{MR2402223, Ramsay88}).
Because of the noisy or tendency in the data, quite often, the fitted curve doesn't exhibit nonnegativitiy, even though it should be.  For instance, let $(x_i, y_i)$ be the monthly precipitation amount at some region, where $x_i$ is the variable for months and $y_i$ is the rain fall amount.   The rain fall amount may be decreasing during some period till at some months there is little or no rain, then it may increase again.   Because of the increase and decrease trend before and after these certain months, the fitted curve may have negative values at these points for $y$~\cite{MR2168993}.
Even if the data points are nonnegative, without imposing the nonnegativity constraints, the resulting models may take negative values at some areas~\cite{XiaMcNicholas}.
In~\cite{ MR2402223} a numerical example of cubic spline approximation of arrival-rate for an e-mail data set shows that the maximum likelihood spline takes negative values in a significant time period with all positive data points, and the estimation problem may even be unbounded and thus ill-posed.

To obtain a satisfiable and sometimes meaningful model, the nonnegativity constraint on the output needs to be imposed on the regression.  The nonnegative cubic spline approximation in truncated power basis is considered in \cite{MR2402223}.   Constrained smoothing spline approximation is studied in \cite{MR883352}, but they acknowledge the computational difficulty in their approach.
Since the B-spline basis functions are nonnegative, imposing positivity on B-spline coefficients \cite{MR1631345} or integrating B-splines with positive coefficients (I-spline \cite{Ramsay88}) preserves positivity in regression.  But this approach excludes some classes of positive splines and thus reduces the accuracy of the regression.
It is proved in~\cite{MR0378357} that errors in approximation of nonnegative functions by B-splines of order $k$ (degree $< k$) with nonnegative coefficients are bigger in magnitude compared with errors in approximation by nonnegative splines of the same order, and the difference between the magnitudes of the errors increases with the order of the splines.
The two approximation schemes give errors of the same magnitude only if $k \leq 2$, i.e. approximation by piece wise constant or piece wise linear functions.
Because of the approximation and computational advantage of P-splines, this paper focuses on nonnegative P-spline approximation.

To simplify notation, in this paper, we concatenate vectors row wise by `,' and concatenate vectors column wise by ';'; for instance, the adjoining of vectors $x$, $y$, and $z$ can be represented as
\[ \begin{pmatrix} x \\ y \\ z \end{pmatrix} = \left(x^\top, y^\top, z^\top \right) = \left( x; y; z \right) .   \]

\section{Nonnegative Cubic Polynomials}
In the context below, matrices are represented by capital letters: $A\equiv [a_{ij}]$, where the element of matrix $A$ at both of its $i$th row and $j$th column is denoted as $a_{ij}$.  Let $A \succeq 0$ represent the symmetric matrix $A$ being positive semidefinite.  For two matrices $A$ and $B$ of the same size, let $A \bullet B$ denote their Hadamard product:
\[ A \bullet B = \sum_{ij} a_{ij} b_{ij} . \]

By Markov-Lukacs theorem \cite{Nes00FSS}, a cubic polynomial $p(x) \equiv \beta_3 x^3 + \beta_2 x^2 + \beta_1 x + \beta_0$ is nonnegative on the interval $[t_i, t_{i+1}]$ if and only if there exist $c_1, c_2, d_1, d_2 \in \mathbb{R}$ such that $p(x)$ can be represented as
\[ p(x) = (x - t_i) (c_1 x + c_2)^2 + (t_{i+1} - x)(d_1 x + d_2)^2 .   \]
Denote 
\[ H \equiv \begin{bmatrix} x^2 & x \\ x & 1 \end{bmatrix} .\]
Then by \cite[Theorem 1]{Nes00FSS}, the above representation is equivalent to: existing $C \equiv [c_{ij}] \succeq 0$, $D \equiv [d_{ij}] \succeq 0$ such that
\[ (c_1 x + c_2)^2 = C \bullet H, \quad  (d_1 x + d_2)^2 = D \bullet H .\] 
Because $C \succeq 0$ is equivalent to: $c_{11} \geq 0, c_{22} \geq 0, c_{12}^2 \leq c_{11}c_{22}$,
$p(x)$ is nonnegative on $[t_i, t_{i+1}]$ if and only if there exist $c_{11}, c_{12}, c_{22}, d_{11}, d_{12}, d_{22} \in \mathbb{R}$ such that
 
\begin{equation} \label{eq:pcubic}
\begin{gathered}
\beta_3 = c_{11} -  d_{11}, \\
\beta_2 =  -t_i c_{11} + 2 c_{12} + t_{i+1} d_{11} - 2 d_{12}, \\
\beta_1 = - 2 t_i c_{12} + c_{22} + 2 t_{i+1}d_{12} - d_{22}, \\
\beta_0= -t_i c_{22} + t_{i+1} d_{22}\\
c_{11}, c_{22}, d_{11}, d_{22} \geq 0 \\
c_{12}^2 \leq c_{11}c_{22}, \quad d_{12}^2 \leq d_{11}d_{22}.
\end{gathered}
\end{equation}
 
\section{Nonnegative Representations By B-Splines  Of Order Four}
Based on the definition of B-spline basis functions~\eqref{BSplineDef}, the $i$th B-spline basis function of order three for knot sequence $\mathbf{t}$  is
\[ B_{i3} = \omega_{i3}\omega_{i2}\chi_i + \left[ \omega_{i3}\left(1 - \omega_{i+1, 2}\right) + \left(1 - \omega_{i+1,3}\right) \omega_{i+1,2} \right]\chi_{i+1} + \left(1 - \omega_{i+1,3}\right)\left(1 - \omega_{i+2,2}\right)\chi_{i+2} . \]
And the $i$th B-spline basis function of order four for knot sequence $\mathbf{t}$ is
\[ \begin{split}
 B_{i4} & = \omega_{i4} B_{i,3} + (1 - \omega_{i+1, 4}) B_{i+1, 3} \\
 &= \omega_{i4}\omega_{i3}\omega_{i2} \chi_{i} + \left[\omega_{i4}\left(\omega_{i3}\left(1 - \omega_{i+1, 2}\right) + \left(1 - \omega_{i+1,3}\right) \omega_{i+1,2} \right) + \left(1 - \omega_{i+1,4}\right)   \omega_{i+1,3}\omega_{i+1,2} \right]\chi_{i+1} \\
 & \quad + \left[ \omega_{i4}  \left(1 - \omega_{i+1,3}\right)\left(1 - \omega_{i+2,2}\right) + \left(1 - \omega_{i+1,4}\right) 
 \left( \omega_{i+1,3}\left(1 - \omega_{i+2, 2}\right) + \left(1 - \omega_{i+2,3}\right) \omega_{i+2,2} \right)  \right] \chi_{i+2} \\
 & \quad + \left(1 - \omega_{i+1,4}\right)\left(1 - \omega_{i+2,3}\right)\left(1 - \omega_{i+3,2}\right)\chi_{i+3}
 \end{split} \]
Hence, on the interval $[t_i, t_{i+1})$, the B-spline $\sum_i \alpha_i B_{i4\mathbf{t}}(x)$ is 
\[
\begin{split}
& \Big\{ \alpha_i  \omega_{i4}\omega_{i3}\omega_{i2} + \alpha_{i-1} \left[\omega_{i-1,4}\left(\omega_{i-1,3}\left(1 - \omega_{i 2}\right) + \left(1 - \omega_{i3}\right) \omega_{i2} \right) + \left(1 - \omega_{i,4}\right)   \omega_{i3}\omega_{i2} \right] \\
&  + \alpha_{i-2}\left[ \omega_{i-2,4}  \left(1 - \omega_{i-1,3}\right)\left(1 - \omega_{i,2}\right) + \left(1 - \omega_{i-1,4}\right) 
 \left( \omega_{i-1,3}\left(1 - \omega_{i, 2}\right) + \left(1 - \omega_{i,3}\right) \omega_{i,2} \right)  \right] \\
 &  + \alpha_{i-3} \left(1 - \omega_{i-2,4}\right)\left(1 - \omega_{i-1,3}\right)\left(1 - \omega_{i2}\right) \Big\} \chi_i (x) \\
 \end{split}
\]
Given a finite knot sequence $\mathbf{t} \equiv (t_1, \dots, t_n)$, define 
\[ t_{1-k} = \dots = t_0 = t_1, \qquad  t_n = t_{n+1} = \dots = t_{n+k} .  \]
For $u=0, 1, 2, 3$ and $v = i-3, i-2, i-1, i$, let $a_{u, v}^{(i)}$ denote the coefficient of $x^u$ associated with $\alpha_v$ of the polynomial on the interval $[t_i, t_{i+1})$. 

\[ \sum_i \alpha_i B_{i4} = \sum_i \sum_{j=i-3}^i \alpha_j \left( \sum_{l=0}^3 a_{lj}^{(i)} x^l \right) \chi_i , \]
where for $j \leq 0$, define $\alpha_j=0$ and $a^{(i)}_{u,j} = 0$.  In other words,

\[
\begin{split}
\sum_{l=0}^3 a_{li}^{(i)}x^l & = \omega_{i4}\omega_{i3}\omega_{i2} \\
\sum_{l=0}^3 a_{l, i-1}^{(i)}x^l &=  \omega_{i-1,4}\left(\omega_{i-1,3}\left(1 - \omega_{i 2}\right) + \left(1 - \omega_{i3}\right) \omega_{i2} \right) + \left(1 - \omega_{i,4}\right)   \omega_{i3}\omega_{i2}  \\
\sum_{l=0}^3 a_{l, i-2}^{(i)}x^l & = \omega_{i-2,4}  \left(1 - \omega_{i-1,3}\right)\left(1 - \omega_{i,2}\right) + \left(1 - \omega_{i-1,4}\right) 
 \left( \omega_{i-1,3}\left(1 - \omega_{i, 2}\right) + \left(1 - \omega_{i,3}\right) \omega_{i,2} \right)  \\
\sum_{l=0}^3 a_{l, i-3}^{(i)}x^l & = \left(1 - \omega_{i-2,4}\right)\left(1 - \omega_{i-1,3}\right)\left(1 - \omega_{i2}\right) .
\end{split}
\]

Let $1/(t_j - t_l) \equiv 0$ for $t_j = t_l$.  Then $a_{u, v}^{(i)}$ can be represented in terms of $\mathbf{t}$ as below:

\[
\begin{split}
b_{i-3} & \equiv \frac{1}{(t_{i+1}-t_{i-2})(t_{i+1}-t_{i-1})(t_{i+1}-t_{i})} \\
a^{(i)}_{3, i-3} & = -b_{i-3}, \quad
 a^{(i)}_{2, i-3}  = 3t_{i+1} b_{i-3}, \quad 
 a^{(i)}_{1, i-3}  = -t_{i+1}a^{(i)}_{2, i-3}, \quad 
 a^{(i)}_{0, i-3}  = t_{i+1}^3 b_{i-3} \\
b_{i-2} & \equiv  \frac{1}{(t_{i+2}-t_{i-1})(t_{i+1}-t_{i-1})(t_{i+1}-t_{i})} , \quad
b_{i-1}  \equiv \frac{1}{(t_{i+2}-t_{i-1})(t_{i+2}-t_{i})(t_{i+1}-t_{i})} \\
a^{(i)}_{3, i-2} & = b_{i-3} + b_{i-2} + b_{i-1} \\
a^{(i)}_{2, i-2} & = -(t_{i-2}+ 2t_{i+1}) b_{i-3}  - (t_{i-1} + t_{i+1} + t_{i+2}) b_{i-2} -(t_i + 2t_{i+2}) b_{i-1} \\
a^{(i)}_{1, i-2} & = (2t_{i-2}t_{i+1} + t_{i+1}^2) b_{i-3} + (t_{i-1}t_{i+1} + t_{i-1}t_{i+2} + t_{i+1}t_{i+2})b_{i-2} + (2t_it_{i+2} + t_{i+2}^2)b_{i-1}\\
a^{(i)}_{0, i-2} & = -t_{i-2}t_{i+1}^2b_{i-3} - t_{i-1}t_{i+1}t_{i+2}b_{i-2} - t_i t_{i+2}^2b_{i-1}\\
b_{i-4} & \equiv \frac{1}{(t_{i+3} - t_i)(t_{i+2}-t_i)(t_{i+1}-t_i)} \\
a^{(i)}_{3, i-1} & = -b_{i-4} - b_{i-2} - b_{i-1}\\
a^{(i)}_{2, i-1} & = (t_{i+3} + 2t_i)b_{i-4} + (t_{i+1} + 2t_{i-1})b_{i-2} + (t_{i+2} + t_i + t_{i-1})b_{i-1} \\
a^{(i)}_{1, i-1} & = -(t_i^2 + 2t_{i+3}t_i)b_{i-4} - (2t_{i+1}t_{i-1} + t_{i-1}^2)b_{i-2}- [t_{i+2}(t_{i-1}+t_i) + t_{i-1}t_i]b_{i-1} \\
a^{(i)}_{0, i-1} & = t_{i+3}t_i^2b_{i-4} + t_{i+1}t_{i-1}^2b_{i-2} + t_{i+2}t_it_{i-1}b_{i-1}\\
a^{(i)}_{3,i} &= b_{i-4}, \quad 
a^{(i)}_{2,i} = -3t_ib_{i-4} , \quad 
a^{(i)}_{1,i} = -t_ia^{(i)}_{2,i} , \quad
a^{(i)}_{0,i}  = -t_i^3 b_{i-4} \\
\end{split}
\]

Denote
\[ \Delta_i \equiv t_{i+1} - t_i . \]
For equally spaced knot sequence, i.e., $ \Delta = \Delta_i = \Delta_{i+1} = \dots$, the above expression for $a_{u, v}^{(i)}$ can be simplified:
\[
\begin{aligned}
a^{(i)}_{3, i-3} & = -\frac{1}{6\Delta^3} & 
a^{(i)}_{2, i-3} & = \frac{t_i + \Delta}{2\Delta^3} & 
a^{(i)}_{1, i-3} & = -\frac{(t_i + \Delta)^2}{2\Delta^3} & 
a^{(i)}_{0, i-3} & = \frac{(t_i + \Delta)^3}{6\Delta^3} \\
a^{(i)}_{3, i-2} & = \frac{1}{2\Delta^3} & 
a^{(i)}_{2, i-2} & = -\frac{3t_i + 2 \Delta}{2\Delta^3} & 
a^{(i)}_{1, i-2} & = \frac{3t_i^2 + 4t_i \Delta}{2\Delta^3} & 
a^{(i)}_{0, i-2} & = \frac{-3t_i^3 - 6 t_i^2 \Delta + 4 \Delta^3}{6\Delta^3} \\
a^{(i)}_{3, i-1} & =  -\frac{1}{2\Delta^3} & 
a^{(i)}_{2, i-1} & =  \frac{3t_i + \Delta}{2\Delta^3} & 
a^{(i)}_{1, i-1} & =  \frac{-3t_i^2 -2t_i \Delta + \Delta^2}{2\Delta^3} & 
a^{(i)}_{0, i-1} & =  \frac{3t_i^3 + 3t_i^2\Delta -3t_i\Delta^2 + \Delta^3}{6\Delta^3}\\ 
a^{(i)}_{3, i} & = \frac{1}{6\Delta^3} & 
a^{(i)}_{2, i} & = -\frac{t_i}{2\Delta^3} & 
a^{(i)}_{1, i} & = \frac{t_i^2}{2\Delta^3} & 
a^{(i)}_{0, i} & = -\frac{t_i^3
}{6\Delta^3}\\
\end{aligned}
\]

By \eqref{eq:pcubic}, the B-spline $\sum_i \alpha_i B_{i4}$ is nonnegative on the interval $[t_i, t_{i+1})$ if{f} there exist 
$c_{11}^{(i)}, c_{22}^{(i)}, d_{11}^{(i)}, d_{22}^{(i)}$, such that 
\begin{equation} \label{constraints}
\begin{gathered}
\sum_{j=i-3}^{i} a^{(i)}_{3j} \alpha_j   = c_{11}^{(i)} -  d_{11}^{(i)},\\ 
\sum_{j=i-3}^{i} a^{(i)}_{2j} \alpha_j =  -t_i c_{11}^{(i)} + 2 c_{12}^{(i)} + t_{i+1} d_{11}^{(i)} - 2 d_{12}^{(i)}, \\ 
\sum_{j=i-3}^{i} a^{(i)}_{1j} \alpha_j = - 2 t_i c_{12}^{(i)} + c_{22}^{(i)} + 2 t_{i+1} d_{12}^{(i)} - d_{22}^{(i)}, \\
\sum_{j=i-3}^{i} a^{(i)}_{0j} \alpha_j = -t_i c_{22}^{(i)} + t_{i+1} d_{22}^{(i)} \\
c_{11}^{(i)}, c_{22}^{(i)}, d_{11}^{(i)}, d_{22}^{(i)} \geq 0 \\
\left(c_{12}^{(i)}\right)^2 \leq c_{11}^{(i)} c_{22}^{(i)}, \quad \left(d_{12}^{(i)}\right)^2 \leq d_{11}^{(i)}d_{22}^{(i)}.
\end{gathered}
\end{equation}

\paragraph{The model.} Adding constraints~\eqref{constraints} to the P-spline regression loss function~\eqref{eq:PSpline}, we obtain the formula for fitting the metamodel: 

\begin{equation} \label{prob:full}
\begin{split}
\min_{\alpha, c, d} & \quad \sum_{i=1}^m \left[ y_i - \sum_{j=1}^n \alpha_j B_{j4}(x_i) \right]^2 + \lambda \sum_{j=3}^n \left( \alpha_j - 2\alpha_{j-1} + \alpha_{j-2} \right)^2. \\
\text{s.t.} \quad &
\sum_{j=i-3}^{i} a^{(i)}_{3j} \alpha_j   = c_{11}^{(i)} -  d_{11}^{(i)},\\ 
& \sum_{j=i-3}^{i} a^{(i)}_{2j} \alpha_j =  -t_i c_{11}^{(i)} + 2 c_{12}^{(i)} + t_{i+1} d_{11}^{(i)} - 2 d_{12}^{(i)}, \\ 
& \sum_{j=i-3}^{i} a^{(i)}_{1j} \alpha_j = - 2 t_i c_{12}^{(i)} + c_{22}^{(i)} + 2 t_{i+1} d_{12}^{(i)} - d_{22}^{(i)}, \\
& \sum_{j=i-3}^{i} a^{(i)}_{0j} \alpha_j = -t_i c_{22}^{(i)} + t_{i+1} d_{22}^{(i)} \\
& c_{11}^{(i)}, c_{22}^{(i)}, d_{11}^{(i)}, d_{22}^{(i)} \geq 0 \\
&  \left( c_{12}^{(i)}\right)^2 \leq c_{11}^{(i)} c_{22}^{(i)}, \quad \left( d_{12}^{(i)}\right)^2 \leq d_{11}^{(i)}d_{22}^{(i)} \\
& (i = 1, \dots, n) .
\end{split}
\end{equation}

\paragraph{Variable reduction.}
Let $\alpha$ denote the column vector containing all $\alpha_i$:
$ \alpha \equiv \left( \alpha_1, \alpha_2, \dots, \alpha_n \right)^\top$.
Denote
\[ c_i \equiv \left( c_{11}^{(i)}, c_{22}^{(i)},  c_{12}^{(i)} \right)^\top, \qquad d_i \equiv \left( d_{11}^{(i)}, d_{22}^{(i)},  d_{12}^{(i)} \right)^\top . \]
Let $c$ and $d$ denote the column vectors containing all $c_{lj}^{(i)}$'s and  $d_{lj}^{(i)}$'s:
\[
c \equiv \left( c_1; c_2; \dotsc; c_n \right),
\qquad
d \equiv \left( d_1; d_2; \dotsc; d_n \right).
\]

Constraints~\eqref{constraints} contain $4n$ homongenous equations and $7n$ variables: $\alpha$, $c$, and $d$.
By Curry-Schoenberg theorem~\cite{MR0218800, deBoor01}, the sequence $B_{1, k, \mathbf{t}}$, \dots, $B_{n, k, \mathbf{t}}$ is a basis for the linear space of piecewise polynomials of order $k$ with break sequence $\mathbf{t}$ that satisfies continuity condition specified by the multiplicities of the elements of $\mathbf{t}$.
Since the sub-matrix corresponding to $\alpha$ in the coefficient matrix of the constraints~\eqref{constraints} is the linear transformation from the B-spline basis to the truncated power function basis, the matrix corresponding to $\alpha$ has full column rank.

\begin{lemma}
The coefficient matrix of the equalities in constraints~\textup{\eqref{constraints}} has rank at least $4n - \kappa + \varrho$, where $\kappa$ is the number of total multiple knots -- counted with multiplicities, and $\varrho$ is the number of different multiple knots -- counted without multiplicities.
\end{lemma}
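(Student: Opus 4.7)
The plan is to show that the submatrix consisting only of the $c$- and $d$-columns already has rank $4n - \kappa + \varrho$. Since adjoining the $\alpha$-columns can only increase the rank, this will yield the stated bound while completely decoupling the argument from any interaction with the $\alpha$-block.

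First I will exploit block structure. For a fixed $i$, the six variables $c_{11}^{(i)}, c_{12}^{(i)}, c_{22}^{(i)}, d_{11}^{(i)}, d_{12}^{(i)}, d_{22}^{(i)}$ appear only in the four equations indexed by $i$; there is no coupling of these variables across distinct indices. Consequently the submatrix of the coefficient matrix formed by all the $c$- and $d$-columns is block-diagonal with $n$ blocks $M_i \in \mathbb{R}^{4\times 6}$, and its rank equals $\sum_{i=1}^{n} \mathrm{rank}(M_i)$.

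Next I will compute $\mathrm{rank}(M_i)$ by reading each of its columns as the coefficient vector $(\beta_3, \beta_2, \beta_1, \beta_0)$ of a cubic polynomial via \eqref{eq:pcubic}. The three $c^{(i)}$-columns jointly describe the expansion of $(x - t_i)\,q(x)$ as $q$ ranges over all quadratics, so they span the $3$-dimensional subspace $V_c^{(i)}$ of cubics vanishing at $t_i$; symmetrically the three $d^{(i)}$-columns span the $3$-dimensional subspace $V_d^{(i)}$ of cubics vanishing at $t_{i+1}$. When $t_i \ne t_{i+1}$, any element of $V_c^{(i)} \cap V_d^{(i)}$ has both $t_i$ and $t_{i+1}$ as roots, hence is of the form $(x-t_i)(t_{i+1}-x)\,s(x)$ with $s$ linear, so the intersection is $2$-dimensional and $\dim(V_c^{(i)} + V_d^{(i)}) = 3 + 3 - 2 = 4$, giving $\mathrm{rank}(M_i) = 4$. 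When $t_i = t_{i+1}$ the two subspaces coincide, so $\mathrm{rank}(M_i) = 3$.

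To finish I will count. The number of indices $i$ for which $t_i = t_{i+1}$ equals the total number of consecutive coincident pairs in the knot sequence, which by a standard accounting equals $\sum_k (m_k - 1)$ over the multiplicities $m_k$ of the distinct repeated knot values, that is, $\kappa - \varrho$. Summing the block ranks then gives $4n - (\kappa - \varrho)$ for the rank of the $(c,d)$-submatrix, so the full coefficient matrix has rank at least $4n - \kappa + \varrho$. The only step demanding genuine insight is the polynomial reinterpretation of the $c$- and $d$-columns; once the column spaces are identified as $V_c^{(i)}$ and $V_d^{(i)}$, the rank of each block reduces to a two-line dimension-of-sum calculation and the rest is pure bookkeeping.
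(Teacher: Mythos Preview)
Your proof is correct and mirrors the paper's: both restrict to the block-diagonal $(c,d)$-submatrix, establish that each $4\times 6$ block has rank $4$ when $t_i\neq t_{i+1}$ and rank $3$ otherwise, and then sum. The only difference is expository---the paper writes the block $G_i$ out explicitly and simply asserts its rank, whereas you justify the rank by the cleaner observation that the column space of the block is $V_c^{(i)}+V_d^{(i)}$, the sum of the spaces of cubics vanishing at $t_i$ and at $t_{i+1}$ respectively.
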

\begin{proof}
The submatrix of the coefficent matrix for the equalities in constraints~\textup{\eqref{constraints}} with columns corresponding to $c$ and $d$ is block diagonal, where the $i$th block is:
\[ G_i \equiv  \begin{bmatrix}
-1  &      &     &  1 \\
t_i & -2   &     & -t_{i+1} & 2   & \\
    & 2t_i & -1  &          & -2t_{i+1} & 1   \\
    &      & t_i &          &           & -t_{i+1}
\end{bmatrix} . \]
The block has rank $4$ if $t_i \neq t_{i+1}$, and it has rank $3$ if $t_i = t_{i+1}$.  Since the coefficient matrix of the equalities in constraints~\eqref{constraints} has $4n$ rows, the statement of the lemma follows.
\end{proof}

\begin{corollary}
If all the knots $(t_i)_{i=1}^{n+1}$ are distinct, then the coefficient matrix of the constraints~\textup{\eqref{constraints}} has full row rank.
\end{corollary}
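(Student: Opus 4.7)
The plan is to derive the corollary directly from the lemma just established, by observing that the distinctness hypothesis forces both parameters $\kappa$ and $\varrho$ appearing in the rank bound to vanish.

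First I would unpack the definitions in the statement of the lemma in the setting of distinct knots: a multiple knot is a knot whose multiplicity in the sequence $\mathbf{t}$ exceeds one. When all $t_i$ for $i = 1, \dots, n+1$ are distinct, no knot is multiple, so the count with multiplicities $\kappa$ equals $0$ and the count without multiplicities $\varrho$ also equals $0$. This is essentially a bookkeeping step and does not require any additional calculation beyond stating it.

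Next I would substitute these values into the conclusion of the lemma. The lower bound on the rank becomes $4n - \kappa + \varrho = 4n - 0 + 0 = 4n$. Because the coefficient matrix of the equalities in \eqref{constraints} has exactly $4n$ rows (one block of four equalities per interval, $i = 1, \dots, n$), this lower bound meets the trivial upper bound of $4n$ on the row rank. Hence the rank equals $4n$, which is full row rank.

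There is no real obstacle in this argument since the corollary is a direct specialization of the lemma; the only thing to be careful about is the correct interpretation of $\kappa$ and $\varrho$. Alternatively, one could give a self-contained proof by noting that when $t_i \neq t_{i+1}$ every block $G_i$ in the block-diagonal submatrix of the lemma's proof has rank $4$, so the submatrix in $(c,d)$ alone already has rank $4n$, which again forces the full coefficient matrix to have full row rank. Either approach yields the same one-line corollary.
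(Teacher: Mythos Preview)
Your proposal is correct and matches the paper's approach exactly: the corollary is stated in the paper without proof, as an immediate specialization of the preceding lemma with $\kappa=\varrho=0$. Your alternative observation that each block $G_i$ has rank $4$ when $t_i\neq t_{i+1}$ is likewise just a restatement of the block-rank computation already carried out in the lemma's proof.
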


Therefore, given a distinct knot sequence $\mathbf{t}$, we can use Gauss elimination to represent $\alpha$ by $c$, $d$ and $\mathbf{t}$ in the constraints~\eqref{constraints}.
Since each $\alpha_i$ relates to only $t_i$, $t_{i+1}$, $t_{i+2}$, $t_{i+3}$ in constraints~\eqref{constraints}, we can represent each $\alpha_i$ by at most
variables $c^{(i)}_{11}$, $c^{(i)}_{12}$, $c^{(i)}_{22}$, $d^{(i)}_{11}$, $d^{(i)}_{12}$, $d^{(i)}_{22}$.

For equally spaced knot sequences, below are
representations of $\alpha_{i-3}, \alpha_{i-2}, \alpha_{i-1}, \alpha_i$:

For $4 \leq i \leq n$, omitting the subscript of $t_i$ for simplicity, we have
\begin{equation}
\begin{split}
\alpha_i /\Delta^3& = \left( 2 \frac{t^2}{\Delta^2} + \frac{22t}{3\Delta} + 6 \right) c^{(i)}_{11} + \left( \frac{8t}{3\Delta^2} + \frac{22}{3\Delta} \right)c^{(i)}_{12} + \left( \frac{2t}{3\Delta^3} + \frac{8}{3\Delta^2} \right) c^{(i)}_{22} \\
& - \left( \frac{t^2}{\Delta^2} + \frac{10t}{3\Delta} + \frac{7}{3} \right) d^{(i)}_{11} + \left( \frac{4t^2}{3\Delta^2} + \frac{2t}{3 \Delta^2} - \frac{2}{\Delta} \right)d^{(i)}_{12}  - \left(\frac{2t}{3\Delta^3} + \frac{5}{3\Delta^2} \right) d^{(i)}_{22} \\
\alpha_{i-1}/\Delta^3 & = \left( \frac{t^2}{\Delta^2} + \frac{4t}{3\Delta} \right)c^{(i)}_{11} + \left( \frac{2t}{3\Delta^2} + \frac{4}{3\Delta} \right)c^{(i)}_{12} + \left( \frac{2t}{3\Delta^3} + \frac{5}{3\Delta^2}\right)c^{(i)}_{22}\\
& + \left( \frac{2t}{3\Delta} + \frac{2}{3} \right)d^{(i)}_{11}  + \left( \frac{4t^2}{3 \Delta^3} + \frac{8t}{3 \Delta^2} + \frac{2}{\Delta} \right) d^{(i)}_{12} - \left( \frac{2t}{3 \Delta^3} + \frac{2}{3 \Delta^2} \right) d^{(i)}_{22} \\
\alpha_{i-2} /\Delta^3 & = - \frac{2t}{3\Delta} c^{(i)}_{11}  - \left( \frac{4t}{3\Delta^2} + \frac{2}{3\Delta} \right) c^{(i)}_{12} + \left( \frac{2t}{3\Delta^3} + \frac{2}{3\Delta^2} \right) c^{(i)}_{22} \\
& + \left( \frac{t^2}{\Delta^2} + \frac{2t}{3 \Delta} - \frac{1}{3} \right) d^{(i)}_{11} + \left( \frac{4t^2}{3\Delta^3} + \frac{14t}{3\Delta^3} + \frac{2}{\Delta} \right) d^{(i)}_{12} + \left( - \frac{2t}{3\Delta^3} + \frac{1}{\Delta^2} \right) d^{(i)}_{22} \\
\alpha_{i-3} /\Delta^3 & = \left( - \frac{t^2}{\Delta^2} + \frac{4t}{3\Delta} \right) c^{(i)}_{11} + \left( -\frac{10t}{3\Delta^2} + \frac{4}{3\Delta} \right) c^{(i)}_{12} + \left( \frac{2t}{3\Delta^3} - \frac{1}{3\Delta^2} \right) c^{(i)}_{22} \\
& + \left( 2\frac{t^2}{\Delta^2} - \frac{10t}{3\Delta} + \frac{2}{3} \right)d^{(i)}_{11} + \left( \frac{4t^2}{3\Delta^3} + \frac{20t}{3\Delta^2} - \frac{2}{\Delta} \right) d^{(i)}_{12} + \left( - \frac{2t}{3\Delta^3} + \frac{4}{3\Delta^2} \right) d^{(i)}_{22} 
\end{split}
\end{equation}

For $i=1$:
\[
\begin{split}
\alpha_1 &= 6 \Delta^3 \left[ c_{11}^{(1)} -  d_{11}^{(1)} \right],\\ 
3 t_1 \left[ c_{11}^{(1)} -  d_{11}^{(1)} \right]
&= t_1 c_{11}^{(1)} - 2 c_{12}^{(1)} - t_{2} d_{11}^{(1)} + 2 d_{12}^{(1)}, \\ 
3 t_1^2 \left[ c_{11}^{(1)} -  d_{11}^{(1)} \right]
&= - 2 t_1 c_{12}^{(1)} + c_{22}^{(1)} + 2 (t_{1} + \Delta) d_{12}^{(1)} - d_{22}^{(1)}, \\
 t_1^3 \left[ c_{11}^{(1)} -  d_{11}^{(1)} \right]
&= t_1 c_{22}^{(1)} - (t_{1} + \Delta) d_{22}^{(1)} \\
\end{split}
\]

For $i=2$:
\[
\begin{split}
\alpha_1 &= 4t_2\Delta^2 c_{11}^{(2)} + t \Delta^2 c_{12}^{(2)} + \left( 2 \Delta^3 - 4t_2\Delta^2 \right) d_{11}^{(2)} - 4\Delta^2 d_{12}^{(2)} \\
\alpha_2 &= 6\Delta^2\left(2 t_2 + \Delta \right) c_{11}^{(2)} + 12\Delta^2c_{12}^{(2)} - 12 \Delta^2t_2d_{11}^{(2)} - 12 \Delta^2 d_{12}^{(2)} \\
\alpha_2 &= \frac{6\Delta^2}{\Delta - 2t_2} \Big[ \left( \Delta^2 - 2t_2\Delta - 3t_2^2 \right) c_{11}^{(2)} - 2t_2 c_{12}^{(2)}+ c_{22}^{(2)} + \left(3t_2^2 + 2t_2\Delta - \Delta^2 \right)d_{11}^{(2)} \\
& + 2 \left(t_2 + \Delta\right)d_{12}^{(2)} - d_{22}^{(2)} \Big] \\
\alpha_2 &= \frac{6\Delta^3}{t_2^2 - t_2\Delta^2 + \Delta^3/3}\Big[ \left(t_2^3 + t_2^2 - t_2\Delta^2 + \Delta^3/3 \right) c_{11}^{(2)} - t_2 c_{22}^{(2)} \\
& - \left(t_2^3 + t_2^2 - t_2 \Delta^2 + \Delta^3/2 \right)d_{11}^{(2)} + (t_2 + \Delta) d_{22}^{(2)} \Big]
\end{split}
\]

For $i=3$:
\[
\begin{split}
\alpha_1 & = \Delta \left(t_3^2 - 2 t_3 \Delta \right)c_{11}^{(3)} + \Delta \left(2 t_3 - 2\Delta\right)c_{12}^{(3)}
+ \Delta c_{22}^{(3)} - \Delta \left( \Delta^2 -4t_3\Delta + t_3^2 \right) d_{11}^{(3)}\\
& + \Delta \left(4 \Delta - 2t_3\right) d_{12}^{(3)} - \Delta d_{22}^{(3)} \\
\alpha_1 & = \frac{\Delta^2}{\Delta^2 -t_3\Delta + t_3^2\Delta - t_3^2} \Big[ \left( t_3^3 - 2 t_3^3\Delta + 2t_3^2\Delta - \frac{2t_3\Delta^2}{3} \right)c_{11}^{(3)}  - \Big(\frac{2}{3}\Delta^2 - 2t_3\Delta \\
& + 2t_3^2 \Delta) \Big)c_{12}^{(3)} - t_3 c_{22}^{(3)} - \Big(t_3^3 + t_3^2\Delta^2 - \frac{5}{3}t_3\Delta^2 - 2t_3^3 \Delta + 2 t_3^2\Delta + \frac{\Delta^3}{3} \Big)d_{11}^{(3)} \\
& + \left( \frac{2}{3}\Delta^2 + 2t_3^2 \Delta - 2t_3 \Delta \right) d_{12}^{(3)} + \left(t_3 + \Delta\right) d_{22}^{(3)} \Big] \\
\alpha_2 &= 2 t_3^2\Delta c_{11}^{(3)} + 4t_3\Delta c_{12}^{(3)} + 2\Delta c_{22}^{(3)} + \left(-2t_3^2\Delta + 4 t_3 \Delta^2 \right) d_{11}^{(3)} + \left(4\Delta^2 - 4 t_3\Delta \right) d_{12}^{(3)} - 2\Delta d_{22}^{(3)} \\
\alpha_3 &= \left(3 t_3^2 \Delta + 6 t_3 \Delta^2 \right) c_{11}^{(3)} + \left(6 t_3 \Delta + 6 \Delta^2 \right) c_{12}^{(3)} + 3\Delta c_{22}^{(3)} + \left( 3 \Delta^3 - 3 t_3^2 \Delta \right) d_{11}^{(3)} - 6 t_3 \Delta d_{12}^{(3)} \\
& + 6 \Delta^3 c_{11}^{(3)} - 6 \Delta^3 d_{11}^{(3)} - 3\Delta d_{22}^{(3)} .
\end{split}
\]

Then we can replace $\alpha_i$ in the objective of \eqref{prob:full} by the following relation:

\begin{equation}
\begin{split}
\alpha_1 &= 6 \Delta^3 \left[ c_{11}^{(1)} -  d_{11}^{(1)} \right],\\ 
\alpha_2 &= \frac{6\Delta^3}{t_2^2 - t_2\Delta^2 + \Delta^3/3}\Big[ \left(t_2^3 + t_2^2 - t_2\Delta^2 + \Delta^3/3 \right) c_{11}^{(2)} - t_2 c_{22}^{(2)} \\
& - \left(t_2^3 + t_2^2 - t_2 \Delta^2 + \Delta^3/2 \right)d_{11}^{(2)} + (t_2 + \Delta) d_{22}^{(2)} \Big]\\
\alpha_3 &= \left(3 t_3^2 \Delta + 6 t_3 \Delta^2 \right) c_{11}^{(3)} + \left(6 t_3 \Delta + 6 \Delta^2 \right) c_{12}^{(3)} + 3\Delta c_{22}^{(3)} + \left( 3 \Delta^3 - 3 t_3^2 \Delta \right) d_{11}^{(3)} - 6 t_3 \Delta d_{12}^{(3)} \\
& + 6 \Delta^3 c_{11}^{(3)} - 6 \Delta^3 d_{11}^{(3)} - 3\Delta d_{22}^{(3)}\\ 
i\geq 4: &\\ 
\alpha_i & = \left( 2 t^2\Delta + \frac{22t \Delta^2}{3} + 6 \Delta^3 \right) c^{(i)}_{11} + \left( \frac{8t \Delta }{3} + \frac{22 \Delta^2}{3} \right)c^{(i)}_{12} + \left( \frac{2t}{3} + \frac{8 \Delta}{3} \right) c^{(i)}_{22} \\
& - \left( t^2\Delta + \frac{10t \Delta^2}{3} + \frac{7}{3} \Delta^3 \right) d^{(i)}_{11} + \left( \frac{4t^2 \Delta}{3} + \frac{2t \Delta }{3 } - 2 \Delta^2 \right)d^{(i)}_{12}  - \left(\frac{2t}{3} + \frac{5 \Delta}{3} \right) d^{(i)}_{22}, \\   
\end{split}
\end{equation}

\section{Second-Order Cone Programming}
Index vectors in $\mathbb{R}^n$ from $0$.  A \textit{second-order cone} (\textit{quadratic cone}, \textit{Lorentz cone}, or \textit{ice-cream cone}) in $\mathbb{R}^n$ is the set
\[ \mathcal{Q}_n \equiv \left\{  x = \left(x_0; \bar{x} \right) \in \mathbb{R}\times \mathbb{R}^{n-1} \colon x_0 \geq \lVert \bar{x} \rVert   \right\} .
\]
The \textit{rotated quadratic cone} is obtained by rotating the second-order cone by 45 degrees in the $x_0$-$x_1$ plane:
\[ 
\hat{\mathcal{Q}}_n \equiv \left\{  x = \left(x_0; x_1; \hat{x} \right) \in \mathbb{R}\times \mathbb{R} \times \mathbb{R}^{n-2}  \colon   2x_0 x_1 \geq \lVert \hat{x} \rVert^2 , \,  x_0 \geq 0, \,  x_1 \geq 0   \right\} .
\]
The nonnegative orthant is a one-dimensional second-order cone.
Because a second-order cone induces a partial ordering, an $n$-dimensional vector $x \in \mathcal{Q}_n$ can be represented as  $x \succeq_{\mathcal{Q}_n} 0$.  The subscript $n$ is sometimes omitted when it is clear from the context.

Second-order cone programming is an extension of linear programming.  In second-order cone programming, one minimizes a linear objective function under linear equality constraints and second-order cone constraints where variables are required to be in the second-order cones.   Let $x_i \, (i=1, \dots, r)$ be vectors not necessarily of the same dimensions.  Let $c_i \, (i=1, \dots, r)$ and $b$ be vectors and $A_i \, (i=1, \dots, r)$ be matrices.
 The standard form second-order cone programming problem is 
 \[ 
 \begin{array}{ll}
 \min_x  & \sum_{i=1}^r c_i^\top x_i \\
\text{subject to} & \sum_{i=1}^r A_i x_i = b\\
& x_i \succeq_{Q} 0
 \end{array}
 \]

Second-order cone programming has many applications.  A solution to a second-order cone programming problem can be obtained approximately by interior point methods in polynomial time of the problem data size.  See~\cite{MR1971381} for a survey of applications and algorithms of second-order cone programming.
In addition, the complexity of an interior point method for second-order cone programming doesn't depend on the dimension of the second-order cone.

The metamodel fitting problem~\eqref{prob:full} can be casted as a second-order cone programming problem.  Below are two constructions.
\paragraph{Model I}

\[
\begin{gathered}
\min_{\alpha, z} \qquad  z \\
\text{subject to} 
  \sum_{j=i-3}^{i} a^{(i)}_{3j} \alpha_j   = c_{11}^{(i)} -  d_{11}^{(i)},\\ 
  \sum_{j=i-3}^{i} a^{(i)}_{2j} \alpha_j =  -t_i c_{11}^{(i)} + 2 c_{12}^{(i)} + t_{i+1} d_{11}^{(i)} - 2 d_{12}^{(i)}, \\ 
  \sum_{j=i-3}^{i} a^{(i)}_{1j} \alpha_j = - 2 t_i c_{12}^{(i)} + c_{22}^{(i)} + 2 t_{i+1} d_{12}^{(i)} - d_{22}^{(i)}, \\
  \sum_{j=i-3}^{i} a^{(i)}_{0j} \alpha_j = -t_i c_{22}^{(i)} + t_{i+1} d_{22}^{(i)} \\
  \left(c_{11}^{(i)},  c_{22}^{(i)}, \sqrt{2} c_{12}^{(i)} \right)^\top \in \hat{\mathcal{Q}}_2,    \quad  \left(d_{11}^{(i)},  d_{22}^{(i)}, \sqrt{2} d_{12}^{(i)} \right)^\top \in \hat{\mathcal{Q}}_2, \qquad (i=1, \dots, n)  \\
  \left[ z, y_1 - \sum_{j=1}^n \alpha_j B_{j4}(x_1), \dots, y_m - \sum_{j=1}^n \alpha_j B_{j4}(x_m), \sqrt{\lambda} \left( \alpha_3 - 2\alpha_{2} + \alpha_{1} \right), \dots, \sqrt{\lambda} \left( \alpha_n - 2\alpha_{n-1} + \alpha_{n-2} \right) \right]^\top \in \mathcal{Q}_{m+n+1} .
\end{gathered}
\]

\paragraph{Model II.}
The square of the $L_2$ norm of a vector $x \in \mathbb{R}^n$ no more than the value of $y \in \mathbb{R}$ can be represented as a second-order cone constraint:
\[ \lVert x \rVert^2_2 \leq y \;  \Longleftrightarrow \;  \left(y+1; y-1; 2x\right) \in Q_{n+2} . \]
Therefore, problem~\eqref{prob:full} can also be formulated as the following second-order cone programming model:

\[
\begin{gathered}
\min_{\alpha, u, v} \qquad  \sum_{i=1}^m u_i + \lambda \sum_{j=3}^n v_j \\
\text{subject to} 
  \sum_{j=i-3}^{i} a^{(i)}_{3j} \alpha_j   = c_{11}^{(i)} -  d_{11}^{(i)},\\ 
  \sum_{j=i-3}^{i} a^{(i)}_{2j} \alpha_j =  -t_i c_{11}^{(i)} + 2 c_{12}^{(i)} + t_{i+1} d_{11}^{(i)} - 2 d_{12}^{(i)}, \\ 
  \sum_{j=i-3}^{i} a^{(i)}_{1j} \alpha_j = - 2 t_i c_{12}^{(i)} + c_{22}^{(i)} + 2 t_{i+1} d_{12}^{(i)} - d_{22}^{(i)}, \\
  \sum_{j=i-3}^{i} a^{(i)}_{0j} \alpha_j = -t_i c_{22}^{(i)} + t_{i+1} d_{22}^{(i)} \\
  \left(c_{11}^{(i)},  c_{22}^{(i)}, \sqrt{2} c_{12}^{(i)} \right)^\top \in \hat{\mathcal{Q}}_2,    \quad  \left(d_{11}^{(i)},  d_{22}^{(i)}, \sqrt{2} d_{12}^{(i)} \right)^\top \in \hat{\mathcal{Q}}_2, \qquad (i=1, \dots, n)  \\
\left[  u_p +1, u_p - 1, 2 \left( y_ - \sum_{j=1}^n \alpha_j B_{j4}(x_p) \right) \right]^\top \in \mathcal{Q}_3, \qquad (p = 1, \dots, m) \\
\left[ v_q +1, v_q - 1, 2 \left( \alpha_q - 2\alpha_{q-1} + \alpha_{q-2} \right) \right]^\top \in \mathcal{Q}_3, \qquad (q = 3, \dots, n) .
\end{gathered}
\]

\section{Numerical Examples}
We have implemented the nonnegative P-spline regression by second-order cone programming in MATLAB / GNU Octave~\cite{Octave}.

\subsection{Parameter Selection}
We use equally spaced knots.
For each fixed knot sequence, the smoothness parameter $\lambda$ is chosen to minimize the GCV (Generalized Cross-Validation) statistic~\cite{MR1944261}:
Let $\hat{\alpha}(\lambda)$ be the optimal coefficients under parameter $\lambda$.  The average squared residuals using $\lambda$ for the regression is
\[ ASR(\lambda) = m^{-1} \sum_{i=1}^m 
 \left[ y_i - \sum_{j=1}^n \hat{\alpha}(\lambda)_j B_{j4}(x_i) \right]^2 .
\]
Denote $D \in \mathbb{R}^{n \times (n-2)}$ as
\[ D = \begin{bmatrix} 1 & & & & \\ -2 & 1 & & & \\ 1 & -2 & 1 & &  \\ & \ddots & \ddots & \ddots & \\
 & & 1 & -2 &  1 \\ & & & 1 & -2 \\ & & & & 1
\end{bmatrix}  .  \]
Then the penalty term in the regression loss function can be represented as $ x^\top DD^\top x$.
Let $X \in \mathbb{R}^{m \times n}$ denote the design matrix whose $i$th row is
\[ X_i = \begin{bmatrix}  B_{1,4, \mathbf{t}}(x_i) &  B_{2, 4, \mathbf{t}}(x_i) & \dots & B_{n, 4, \mathbf{t}}(x_i)  \end{bmatrix}  . \]

The smoother matrix $S(\lambda)$ is defined as
\[ S(\lambda) = X(X^\top X + \lambda D D^\top)^{-1} X^\top .  \]
Then the generalized cross validation statistic is
\[ GCV(\lambda) = \frac{ASR(\lambda)}{\left[1 - m^{-1} tr\left\{ S(\lambda) \right\} \right]^2} . \]
 The value $tr\left\{ S(\lambda) \right\}$ measures the effective degrees of freedom of the fit.

The number of knots is determined by the Akaike information theoretical criterion (AIC)~\cite{AIC}, i.e, we run the algorithm with different number of knots and choose the number $n$ for the model that has the minimum $AIC$:
\[ AIC = m \left\{ \ln  \left[ \sum_{i=1}^m \left( y_i - \sum_{j=1}^n \alpha_j B_{j4}(x_i) \right)^2/m \right] + 1\right\} + 2n  .   \]

\subsection{Numerical Examples}
The second-order cone programming model is solved via SDPT3-4.0~\cite{MR2894668}
through the YALMIP interface~\cite{YALMIP}.  YALMIP is a modeling language that models the problems into standard second-order cone programs.  SDPT3 and SeDuMi are state of the art software for second-order cone programming.  The reformatted SDPT3 and SeDuMi for GNU Octave by Michael Grant are available at the repositories on GitHub.

Below are some numerical examples with MATLAB.
   Data points are depicted by blue ``*''; the B-spline fitted function is the green curve.  We tested the method with number of internal knots from $4$ to $19$.  For each knot sequence, the $\lambda$ is chosen to minimize $GCV$.  The values of $\lambda$ tested are $1.0e\!-4, 1.0e\!-3, \dotsc,  10^3, 10^4$.  The number of knots is determined by minimizing $AIC$.

\paragraph{Density estimation.}
Figure~\ref{fig:density} shows the output of our method for density estimation.
 
For the Poisson probability density function with mean $20$:
\[f(k) = \frac{20^k e^{-20}}{k!} , \]
based on $GCV$ and $AIC$, the best model has $5$ interior knots and $\lambda = 1.0^{-4}$.

For the Gamma probability density function with $\alpha = 2, \beta = 2$:
\[f(x; \alpha, \beta) = \frac{1}{\beta^\alpha \Gamma(\alpha)}x^{\alpha-1} e^{-\frac{x}{\beta} }, \quad
\Gamma(\alpha) = \int_0^\infty x^{\alpha - 1} e^{-x} dx, \]
the best model has $19$ interior knots and $\lambda = 10^{-2}$.

For the Weibull probability density function with $\alpha = 1$, $\beta = 1.5$: 
\[ f(x; \alpha, \beta) = \begin{cases} \frac{\alpha}{\beta^\alpha}x^{\alpha -1} e^{-(x/\beta)^\alpha}  & x \geq 0 \\ 0 & x<0 , \end{cases}
\]
the best model has $19$ interior knots and $\lambda = 10^{-3}$.

For the Pareto probability density function with $\alpha = 1$, $b=1$:
\[ f(x; \alpha, b) = \begin{cases} (\alpha b^{\alpha})/(x^{\alpha + 1}),  & x \geq b \\ 0 & x < b , \end{cases} \]
the best model has $17$ interior knots and $\lambda = 10^{-4}$.

\begin{figure}[!htb]
{
\centering
\includegraphics[width=0.495\textwidth]{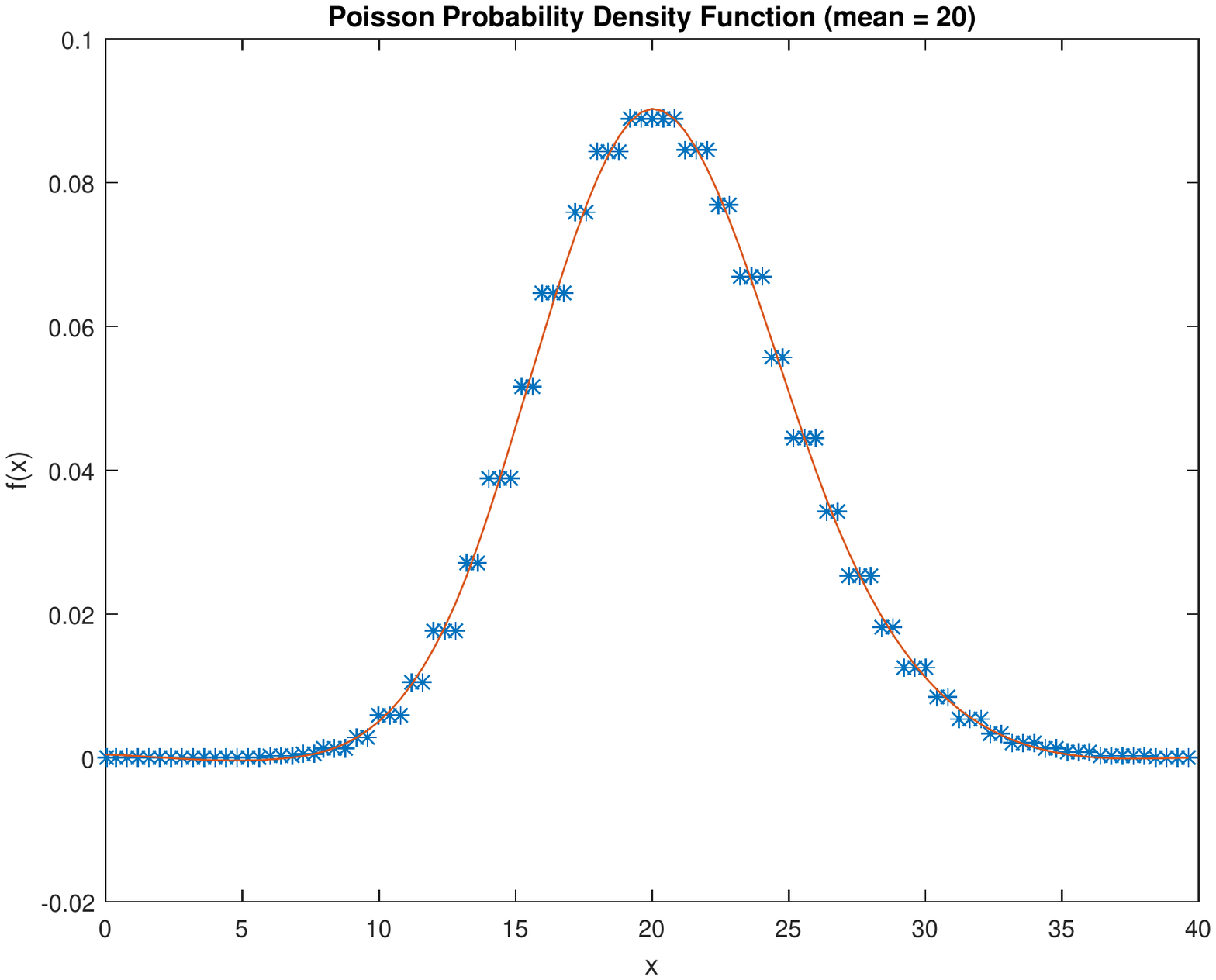}
\includegraphics[width=0.495\textwidth]{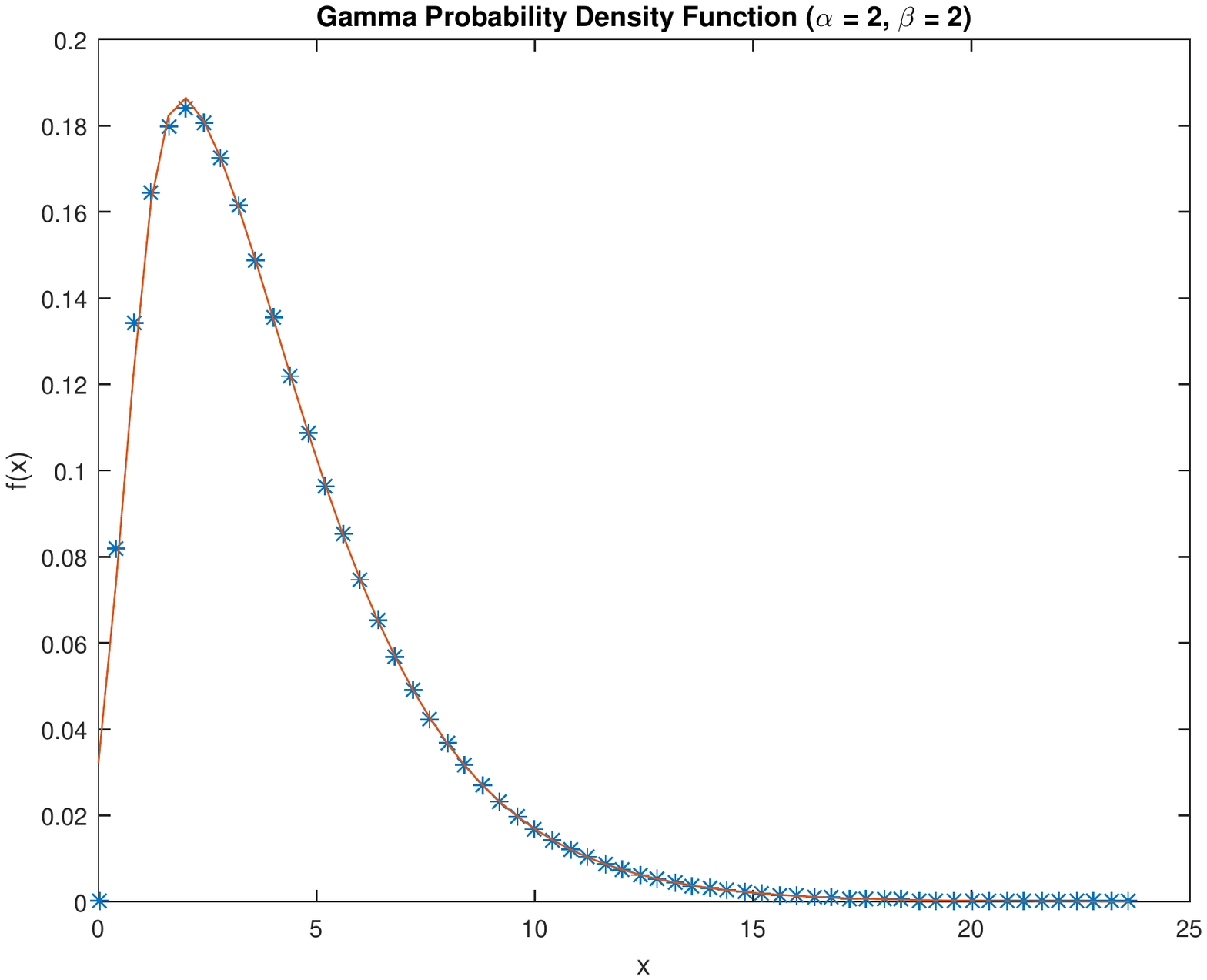}
\includegraphics[width=0.495\textwidth]{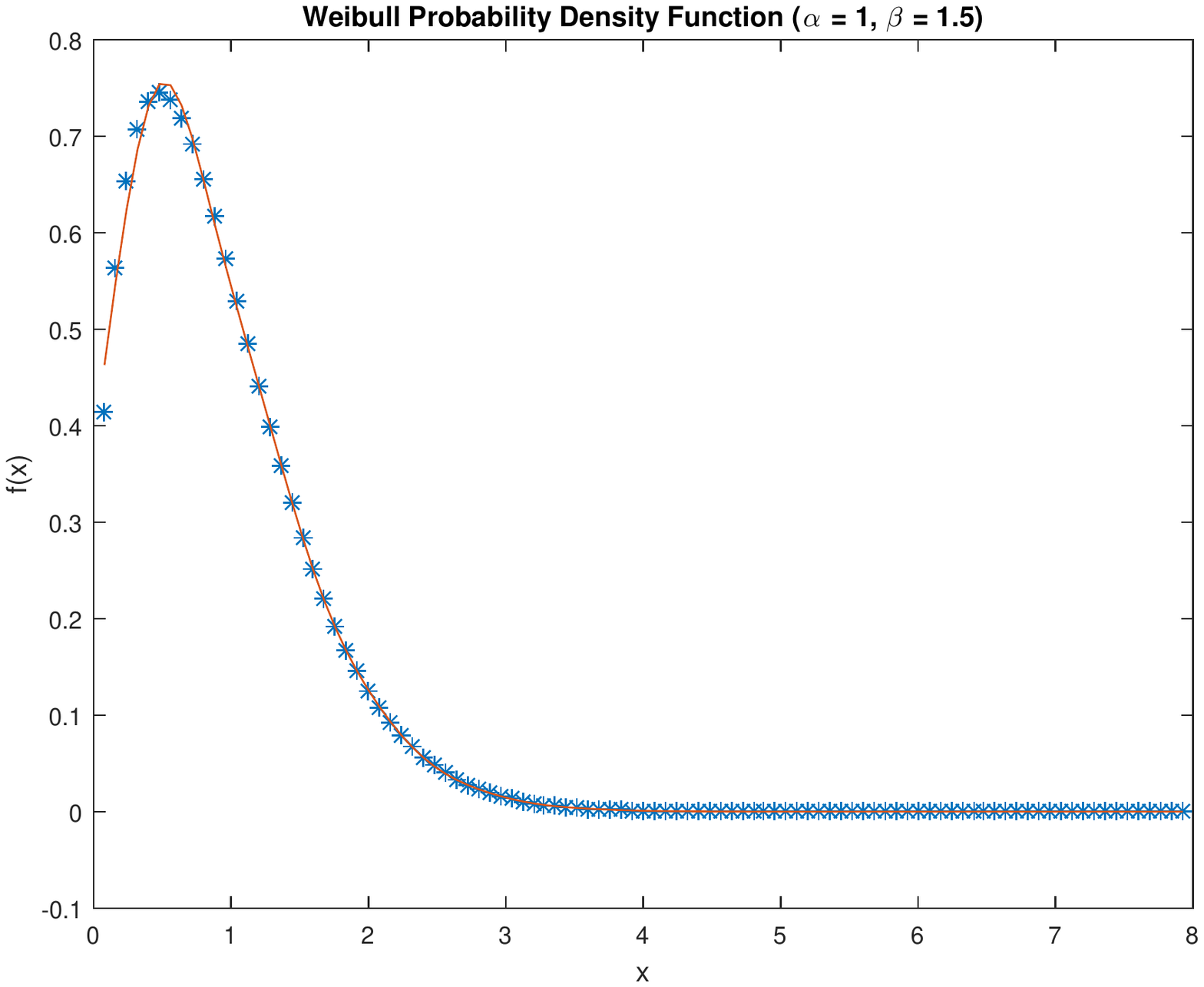}
\includegraphics[width=0.495\textwidth]{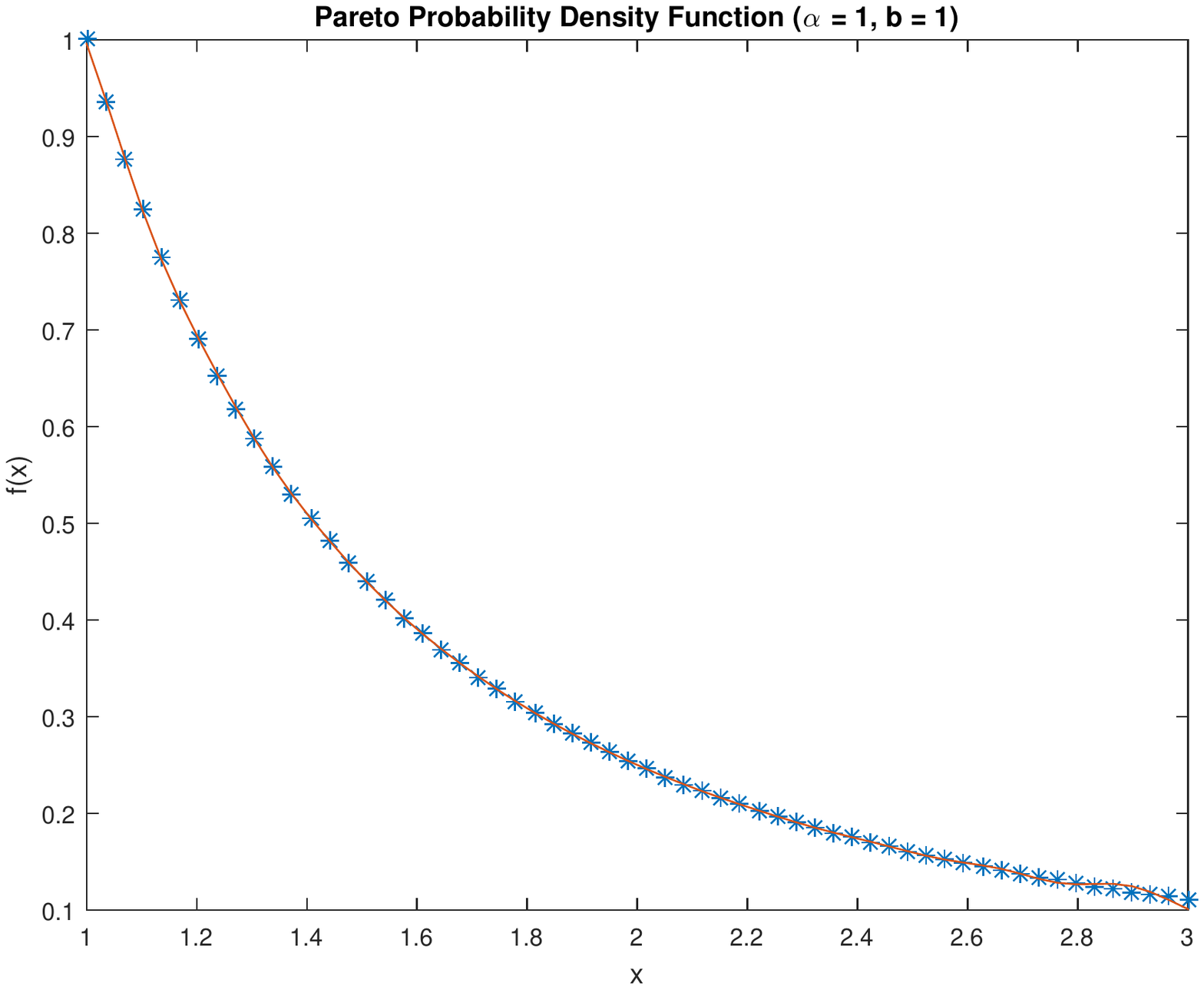}
\caption{Probability Density Estimation} \label{fig:density}
}
\end{figure}
\FloatBarrier

\paragraph{Duration analysis.}
Duration analysis~\cite{MR1167199} studies the spell of an event.
Let $f(t)$ be the density function of the probability distribution of duration.
The survivor function $S(t) = Pr(x \geq t)$ is the probability that the random variable $x$ will equal or exceed $t$.
The hazard function $\lambda(t) = f(t)/S(t)$ is the rate at which spells will be completed at $t$.

The data in Figure~\ref{fig:duration} are strike spells and survivor and hazard estimates from \cite{Kiefer88}.  The duration time are strike lengths in days between 1968 and 1976 involving at least 1,000 works in the U.S. manufacturing industries with major issue.

For the survivor function estimation, the best model has $19$ interior knots and $\lambda = 10^{-3}$;
For the hazard function estimation, the best model has $6$ interior knots and $\lambda = 10^{-2}$;

\begin{figure}[!htb]
{
\centering
\includegraphics[width=0.495\textwidth]{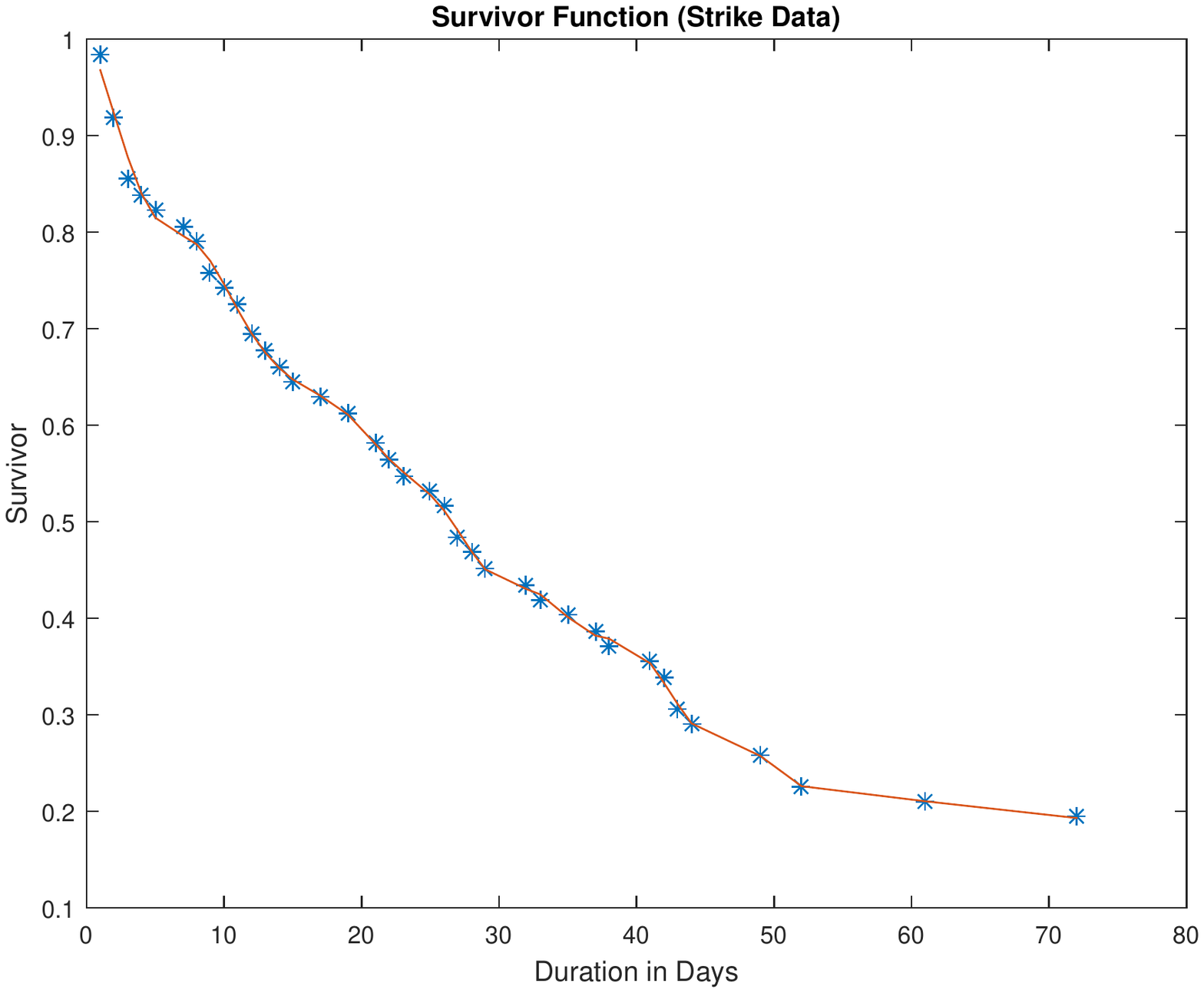}
\includegraphics[width=0.495\textwidth]{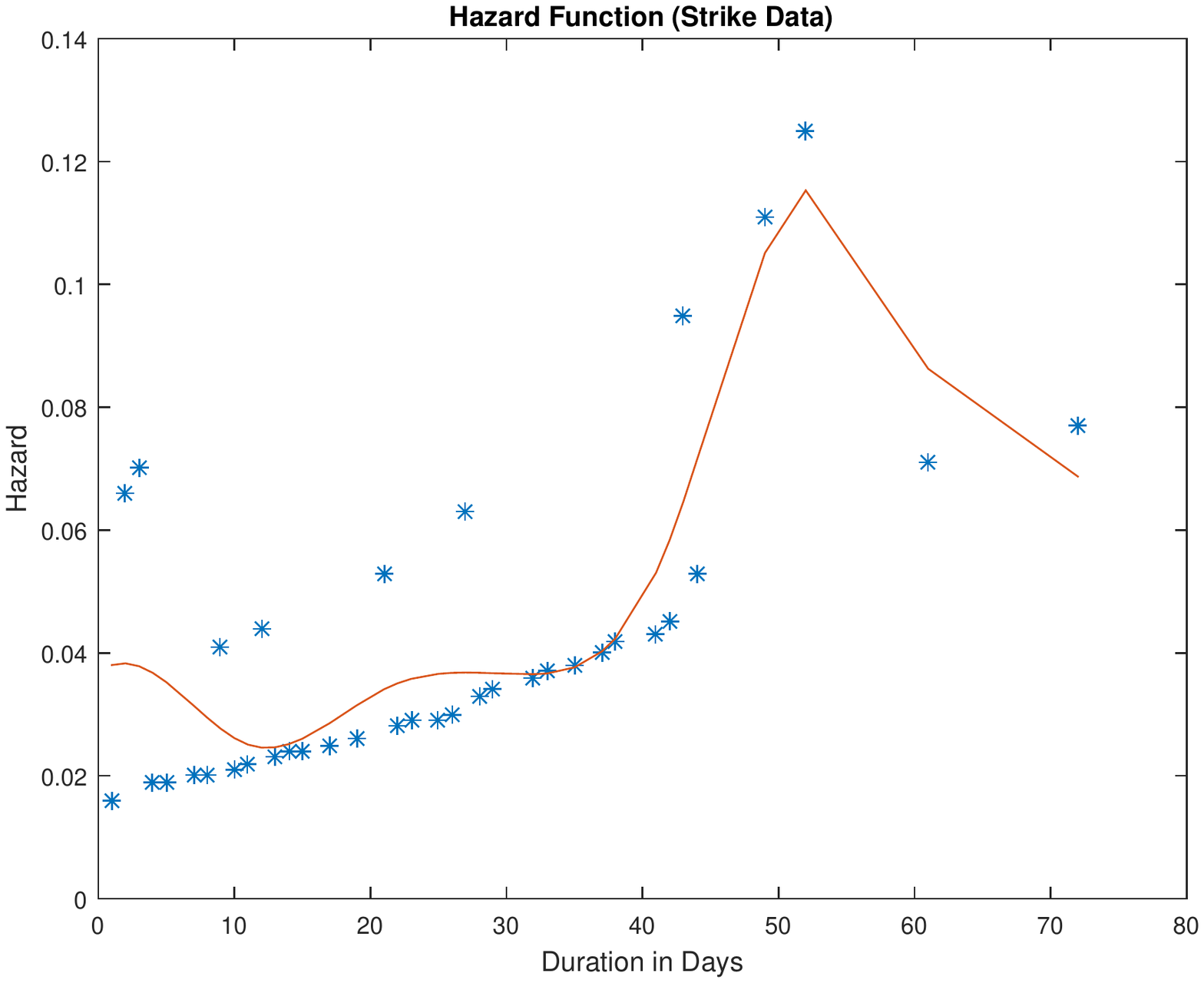}
\caption{Duration Data} \label{fig:duration}
}
\end{figure}

\paragraph{Cost and production.}
The data in Figure~\ref{fig:millcost} are the monthly production costs and output for a hosiery mill over a 4-year period from \cite{millcost}.
The production is in thousands of dozens of pairs, and the costs is in thousands of dollars.
We downloaded the data from Larry Winner's web site: \url{http://www.stat.ufl.edu/~winner/data/millcost.dat}.

For the production data, the best model has $18$ interior knots, and $\lambda = 10^{-2}$.
For the cost data, the best model also has $18$ interior knots, and $\lambda = 10^{-2}$.

\begin{figure}[!htb]
{
\centering
\includegraphics[width=0.495\textwidth]{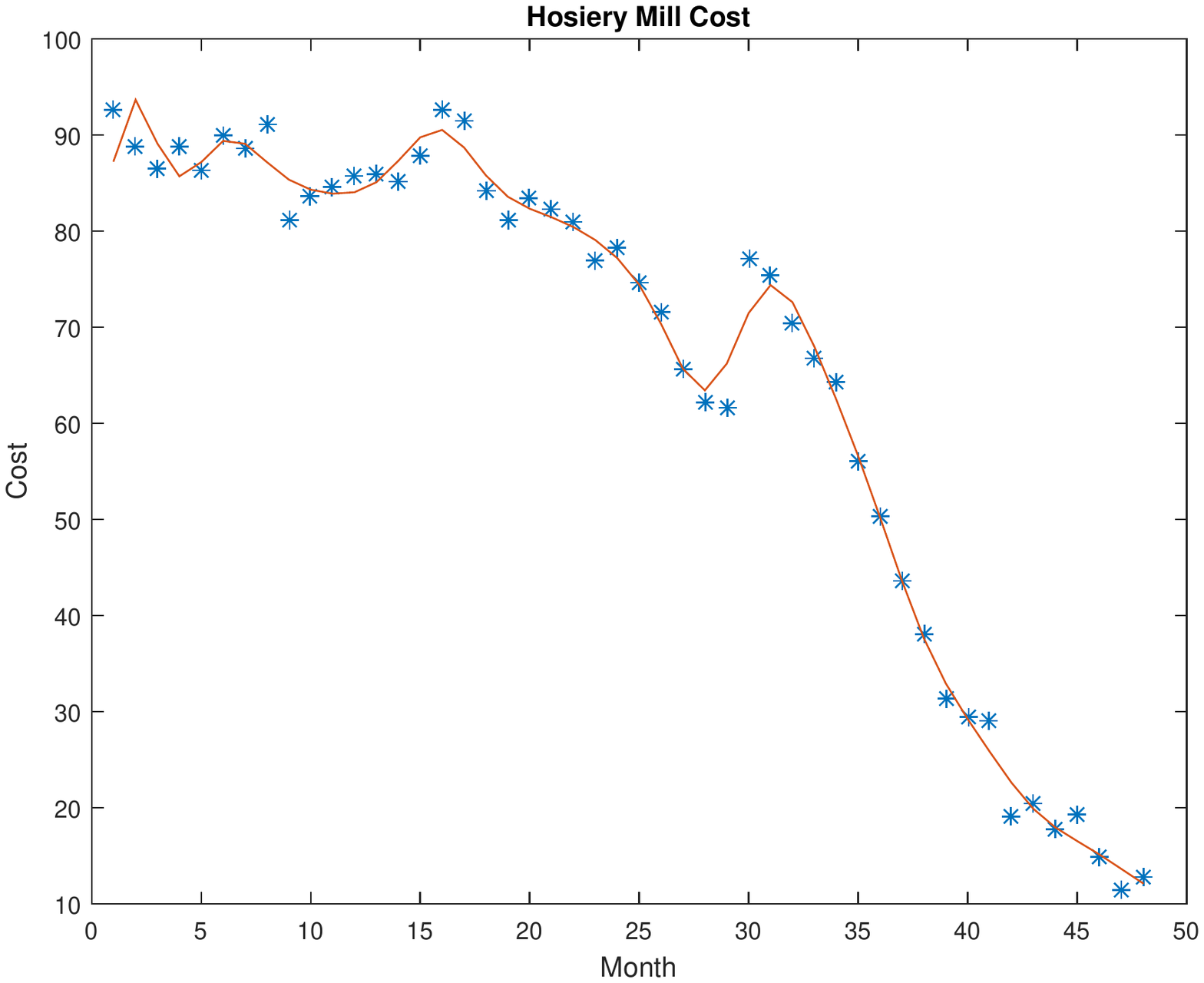}
\includegraphics[width=0.495\textwidth]{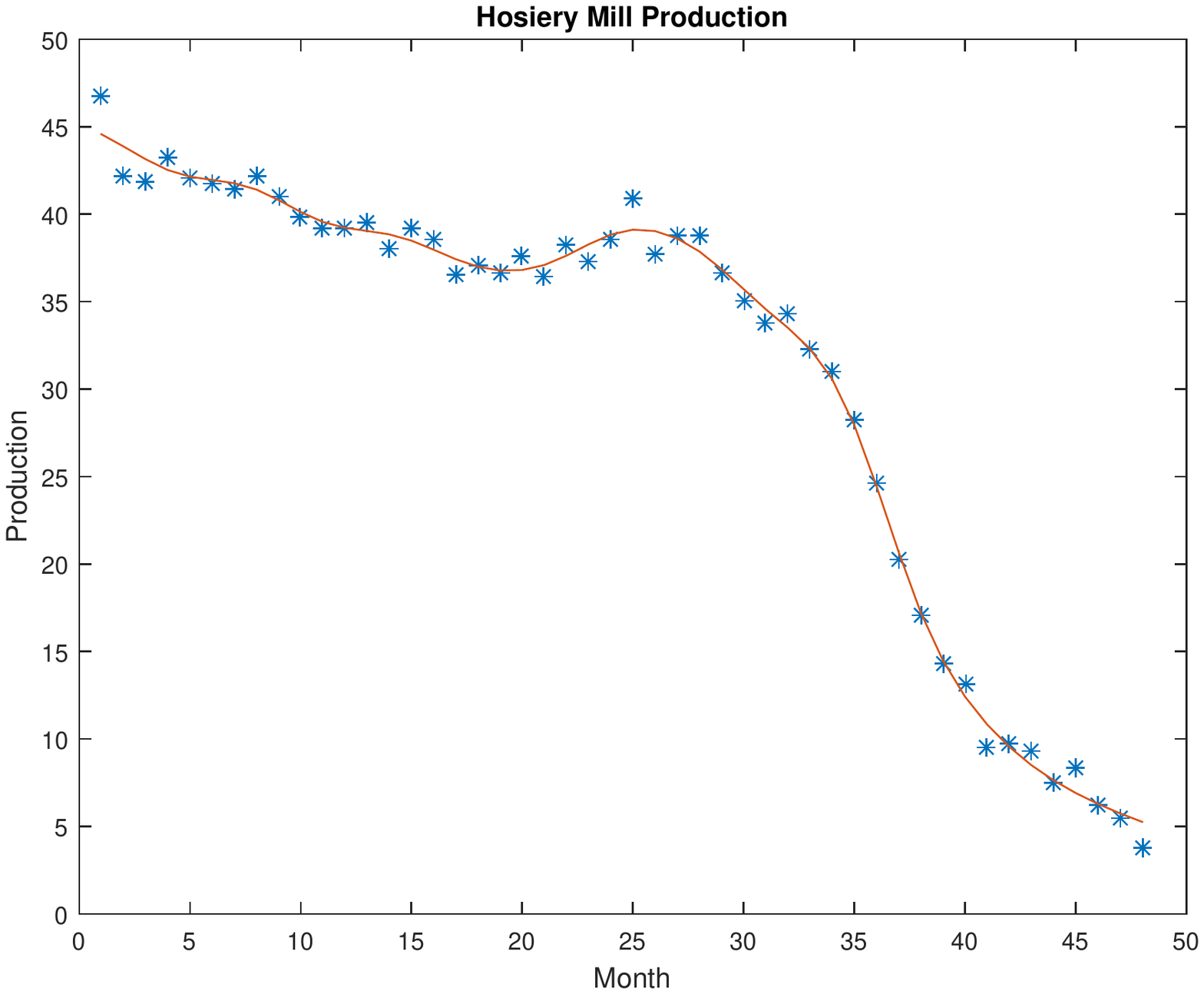}
\caption{Mill Production/Costs} \label{fig:millcost}
}
\end{figure}

\FloatBarrier

\bibliographystyle{plain}
\bibliography{BSpline}

\def\cprime{$'$}
\begin{thebibliography}{10}

\bibitem{AIC}
Hirotogu Akaike.
\newblock Information theory and an extension of the maximum likelihood
  principle.
\newblock In Emanuel Parzen, Kunio Tanabe, and Genshiro Kitagawa, editors, {\em
  Selected Papers of Hirotugu Akaike}, Springer Series in Statistics, pages
  199--213. Springer New York, 1998.

\bibitem{MR1971381}
F.~Alizadeh and D.~Goldfarb.
\newblock Second-order cone programming.
\newblock {\em Math. Program.}, 95(1, Ser. B):3--51, 2003.
\newblock ISMP 2000, Part 3 (Atlanta, GA).

\bibitem{MR2402223}
Farid Alizadeh, Jonathan Eckstein, Nilay Noyan, and G{\'a}bor Rudolf.
\newblock Arrival rate approximation by nonnegative cubic splines.
\newblock {\em Oper. Res.}, 56(1):140--156, 2008.

\bibitem{Banks2000}
Jerry Banks, John S.~II Carson, Barry~L. Nelson, and David~M. Nicol.
\newblock {\em Discrete-Event System Simulation}.
\newblock Prentice Hall, U.S.A., 2010.

\bibitem{Barton98}
Russell~R. Barton.
\newblock Simulation metamodels.
\newblock In {\em Proceedings of the 30th Conference on Winter Simulation}, WSC
  '98, pages 167--176, Los Alamitos, CA, USA, 1998. IEEE Computer Society
  Press.

\bibitem{BoxWilson51}
G.~E.~P. Box and K.~B. Wilson.
\newblock On the experimental attainment of optimum conditions.
\newblock {\em Journal of the Royal Statistical Society. Series B
  (Methodological)}, 13(1):pp. 1--45, 1951.

\bibitem{MR861118}
George E.~P. Box and Norman~R. Draper.
\newblock {\em Empirical model-building and response surfaces}.
\newblock Wiley Series in Probability and Mathematical Statistics: Applied
  Probability and Statistics. John Wiley \& Sons, Inc., New York, 1987.

\bibitem{MR0218800}
H.~B. Curry and I.~J. Schoenberg.
\newblock On {P}\'olya frequency functions. {IV}. {T}he fundamental spline
  functions and their limits.
\newblock {\em J. Analyse Math.}, 17:71--107, 1966.

\bibitem{MR0338617}
Carl de~Boor.
\newblock On calculating with {$B$}-splines.
\newblock {\em J. Approximation Theory}, 6:50--62, 1972.
\newblock Collection of articles dedicated to J. L. Walsh on his 75th birthday,
  V (Proc. Internat. Conf. Approximation Theory, Related Topics and their
  Applications, Univ. Maryland, College Park, Md., 1970).

\bibitem{deBoor01}
Carl de~Boor.
\newblock {\em A practical guide to splines}, volume~27 of {\em Applied
  Mathematical Sciences}.
\newblock Springer-Verlag, New York, revised edition, 2001.

\bibitem{MR0378357}
Carl de~Boor and James~W. Daniel.
\newblock Splines with nonnegative {$B$}-spline coefficients.
\newblock {\em Math. Comp.}, 28:565--568, 1974.

\bibitem{millcost}
Joel Dean.
\newblock {\em Statistical Cost Functions of a Hosiery Mill}, volume XI, no. 4
  of {\em Studies in Business Administration}.
\newblock The School of Business, The University of Chicago, Chicago, IL,
  U.S.A., 1941.

\bibitem{Octave}
John~W. Eaton, David Bateman, S{\o}ren Hauberg, and Rik Wehbring.
\newblock {\em {GNU Octave} version 3.8.1 manual: a high-level interactive
  language for numerical computations}.
\newblock CreateSpace Independent Publishing Platform, 2014.
\newblock {ISBN} 1441413006.

\bibitem{MR1435485}
Paul H.~C. Eilers and Brian~D. Marx.
\newblock Flexible smoothing with {$B$}-splines and penalties.
\newblock {\em Statist. Sci.}, 11(2):89--121, 1996.
\newblock With comments and a rejoinder by the authors.

\bibitem{FLP88}
Linda~Weiser Friedman and Israel Pressman.
\newblock The metamodel in simulation analysis: Can it be trusted?
\newblock {\em The Journal of the Operational Research Society}, 39(10):pp.
  939--948, 1988.

\bibitem{MR1631345}
Xuming He and Peide Shi.
\newblock Monotone {$B$}-spline smoothing.
\newblock {\em J. Amer. Statist. Assoc.}, 93(442):643--650, 1998.

\bibitem{MR1447628}
Andr{\'e}~I. Khuri and John~A. Cornell.
\newblock {\em Response surfaces}, volume 152 of {\em Statistics: Textbooks and
  Monographs}.
\newblock Marcel Dekker, Inc., New York, second edition, 1996.
\newblock Designs and analyses.

\bibitem{Kiefer88}
Nicholas Kiefer.
\newblock Economic duration data and hazard functions.
\newblock {\em Journal of Economic Literature}, 26(2):646--79, 1988.

\bibitem{Kleijnen75}
Jack P.~C. Kleijnen.
\newblock A comment on blanning's “metamodel for sensitivity analysis: The
  regression metamodel in simulation”.
\newblock {\em Interfaces}, 5(3):21--23, 1975.

\bibitem{KSLC2005}
Jack~P.C. Kleijnen, Susan~M. Sanchez, Thomas~W. Lucas, and Thomas~M. Cioppa.
\newblock State-of-the-art review: A user’s guide to the brave new world of
  designing simulation experiments.
\newblock {\em INFORMS Journal on Computing}, 17(3):263--289, 2005.

\bibitem{MR1167199}
Tony Lancaster.
\newblock {\em The econometric analysis of transition data}, volume~17 of {\em
  Econometric Society Monographs}.
\newblock Cambridge University Press, Cambridge, 1990.

\bibitem{LMH}
Michael~S. Lane, Ali~H. Mansour, and John~L. Harpell.
\newblock Operations research techniques: A longitudinal update 1973--1988.
\newblock {\em Interfaces}, 23(2):63--68, 1993.

\bibitem{Law1991}
Averill~M Law, David~M Kelton, and David~M Kelton.
\newblock {\em Simulation modeling and analysis}.
\newblock McGraw-Hill, New York, 2015.

\bibitem{YALMIP}
Johan L{\"o}fberg.
\newblock Yalmip : A toolbox for modeling and optimization in {MATLAB}.
\newblock In {\em Proceedings of the CACSD Conference}, Taipei, Taiwan, 2004.

\bibitem{MR2464113}
Raymond~H. Myers, Douglas~C. Montgomery, and Christine~M. Anderson-Cook.
\newblock {\em Response surface methodology}.
\newblock Wiley Series in Probability and Statistics. John Wiley \& Sons, Inc.,
  Hoboken, NJ, third edition, 2009.
\newblock Process and product optimization using designed experiments.

\bibitem{Nes00FSS}
Yurii Nesterov.
\newblock Squared functional systems and optimization problems.
\newblock In {\em High performance optimization}, volume~33 of {\em Appl.
  Optim.}, pages 405--440. Kluwer Acad. Publ., Dordrecht, 2000.

\bibitem{Oden2006}
J~Tinsley Oden, Ted Belytschko, Jacob Fish, TJ~Hughes, Chris Johnson, David
  Keyes, Alan Laub, Linda Petzold, David Srolovitz, and S~Yip.
\newblock Revolutionizing engineering science through simulation.
\newblock {\em National Science Foundation Blue Ribbon Panel Report}, 65, 2006.

\bibitem{Ramsay88}
J.~O. Ramsay.
\newblock Monotone regression splines in action.
\newblock {\em Statistical Science}, 3(4):pp. 425--441, 1988.

\bibitem{MR2168993}
J.~O. Ramsay and B.~W. Silverman.
\newblock {\em Functional data analysis}.
\newblock Springer Series in Statistics. Springer, New York, second edition,
  2005.

\bibitem{MR0295532}
Christian~H. Reinsch.
\newblock Smoothing by spline functions. {I}, {II}.
\newblock {\em Numer. Math.}, 10:177--183; ibid. 16 (1970/71), 451--454, 1967.

\bibitem{Runge1901}
Carl Runge.
\newblock {\"U}ber empirische funktionen und die interpolation zwischen
  {\"a}quidistanten ordinaten.
\newblock {\em Zeitschrift f{\"u}r Mathematik und Physik}, 46(224-243):20,
  1901.

\bibitem{MR1944261}
David Ruppert.
\newblock Selecting the number of knots for penalized splines.
\newblock {\em J. Comput. Graph. Statist.}, 11(4):735--757, 2002.

\bibitem{MR0016705}
I.~J. Schoenberg.
\newblock Contributions to the problem of approximation of equidistant data by
  analytic functions. {P}art {B}. {O}n the problem of osculatory interpolation.
  {A} second class of analytic approximation formulae.
\newblock {\em Quart. Appl. Math.}, 4:112--141, 1946.

\bibitem{MR0167768}
I.~J. Schoenberg.
\newblock Spline functions and the problem of graduation.
\newblock {\em Proc. Nat. Acad. Sci. U.S.A.}, 52:947--950, 1964.

\bibitem{MR2348176}
Larry~L. Schumaker.
\newblock {\em Spline functions: basic theory}.
\newblock Cambridge Mathematical Library. Cambridge University Press,
  Cambridge, third edition, 2007.

\bibitem{MR2894668}
Kim-Chuan Toh, Michael~J. Todd, and Reha~H. T{\"u}t{\"u}nc{\"u}.
\newblock On the implementation and usage of {SDPT}3---a {M}atlab software
  package for semidefinite-quadratic-linear programming, version 4.0.
\newblock In {\em Handbook on semidefinite, conic and polynomial optimization},
  volume 166 of {\em Internat. Ser. Oper. Res. Management Sci.}, pages
  715--754. Springer, New York, 2012.

\bibitem{MR883352}
Miguel Villalobos and Grace Wahba.
\newblock Inequality-constrained multivariate smoothing splines with
  application to the estimation of posterior probabilities.
\newblock {\em J. Amer. Statist. Assoc.}, 82(397):239--248, 1987.

\bibitem{XiaMcNicholas}
Yu~Xia and Paul~D. McNicholas.
\newblock A gradient method for the monotone fused least absolute shrinkage and
  selection operator.
\newblock {\em Optimization Methods and Software}, 29(3):463--483, 2014.

\end{thebibliography}

\end{document}